\newtheorem{proposition}{\sc Proposition}[section]
\newtheorem{lemma}[proposition]{\sc Lemma}
\newtheorem{theorem}[proposition]{\sc Theorem}
\newtheorem{definition}[proposition]{\sc Definition}
\theoremstyle{definition}
\theoremstyle{remark}
\newcommand{\id}{\operatorname{id}}
\renewcommand{\phi}{\varphi}
\renewcommand{\epsilon}{\varepsilon}
\renewcommand{\[}{\begin{equation}}
\renewcommand{\]}{\end{equation}}
\def\id{{\rm id}}
\newlength{\bibitemsep}\setlength{\bibitemsep}{.3\baselineskip plus .05\baselineskip minus .05\baselineskip}
\newlength{\bibparskip}\setlength{\bibparskip}{0pt}
\let\oldthebibliography\thebibliography
\renewcommand\thebibliography[1]{%
  \oldthebibliography{#1}%
  \setlength{\parskip}{\bibitemsep}%
  \setlength{\itemsep}{\bibparskip}%
}
\begin{document}
\numberwithin{proposition}{section}
\numberwithin{equation}{section}
\baselineskip=13pt
\author{Piotr~M.~Hajac}
\address{Instytut Matematyczny, Polska Akademia Nauk, ul.~\'Sniadeckich 8, Warszawa, 00--656 Poland} 
\email{pmh@impan.pl}
\author{Oskar~M.~Stachowiak}
\address{Wydział Fizyki, Uniwersytet Warszawski, ul.~Pasteura 5, Warszawa, 02-093 Poland}
\email{o.stachowiak@uw.edu.pl}
\title[Counting paths in directed graphs]{{\Large Counting paths in directed graphs}}
\maketitle
\vspace*{-10mm}\begin{abstract}
We consider the class of directed graphs with $N\geq 1$ edges and without loops shorter than~$k\geq1$. 
Using the concept of a labelled graph, we determine graphs from this class that maximize the number of all paths of length~$k$.
Then we show an $R$-labelled version of this result for semirings $R$ contained in the semiring of non-negative real numbers and containing
the semiring of non-negative rational numbers. We end by posing a related open problem
concerning the maximal dimension of the path algebra of a connected acyclic directed graph with $N\geq1$
edges.
\end{abstract}


\section{Introduction}
\noindent 
Graph theory is considered one of the oldest and most accessible branches of combinatorics and has numerous 
natural connections to other areas of mathematics. 
In particular, directed graphs, or quivers, are fundamental tools in representation 
theory \cite{ass06} as well as in noncommutative geometry \cite{pr06}
and topology~\cite{hrt20,cht21}. 
In this paper, we focus entirely on the combinatorics of directed graphs: 
optimization and counting problems are common throughout combinatorics, and our paper solves one of them
(cf.~\cite{ssz23}).

Our goal is to extend the known Theorem~\ref{oldthm} counting fixed-positive-length paths in acyclic directed graphs
to the realm of directed graphs admitting loops. 
Without any assumptions on which loops are admissible, the counting
problem of Theorem~\ref{oldthm} trivializes as the maximal number of paths of length $k$ in a directed graph with 
$N$ edges is $N^k$, and the directed graph realizing
this number is the directed graph with one vertex and $N$ edges (a~Hawaiian earring). 
To overcome this problem, we propose that to count paths of length $k$ we should
only allow loops of length at least~$k$. This leads to our main result, which is Theorem~\ref{main}.

In the next section, we recall basic definitions and results needed for our paper. 
Some of them are standard, some of them come from~\cite{c-a20}, and there is the
inspirational Theorem~\ref{oldthm}. Then, in the subsequent section, first we  adapt
the results of Alexandru Chirvasitu
to the setting of paths of length $k$ in finite $R$-labelled directed graphs whose loops are of length at least~$k$. This allows us to prove our 
main theorem. We follow with a bonus theorem, which is an $R$-labelled version of Theorem~\ref{main}. Finally, we apply the $R$-labelling to find an 
exponential bound on the growth with the number of edges of the amount 
of all positive-length paths in an acyclic directed graph in terms of the number of edges. As we consider only directed graphs, in the remaining 
part of the paper we will often refer to directed graphs as graphs.

\section{Preliminaries}
\begin{definition}
Let $R$ be a commutative semiring. An $R$-labelled directed graph 
\[\nonumber
E:=(E^0,E^1,s,t,l)
\] 
is a quintuple consisting of
\begin{enumerate}
\item
 the set of vertices $E^0$ and the set of edges $E^1$,
\item 
the source and target maps $s,t\colon E^1\rightarrow E^0$
assigning to each edge its source vertex and target vertex, respectively,
\item 
the label map 
$l:E^{1}\rightarrow R\setminus\{0\},$
\end{enumerate}
satisfying the uniqueness condition (no repeating edges)
\[\nonumber
\big(s(e)=s(f) \text{ and } t(e)=t(f)\big)\quad\Rightarrow\quad e=f.
\]
\end{definition}
\noindent 
For $R=\mathbb{N}$, we recover the usual definition of a  graph with finitely many edges between any two vertices by
 replacing an edge $e$ with label $n\in \mathbb{N}$ by $n$ 
edges all starting at $s(e)$ and ending at $t(e)$.

A \emph{finite path} in an $R$-labelled  graph $E$ is a vertex or a finite sequence $p_n:=(e_1,\ldots,e_n)$ of edges
satisfying 
\begin{equation}
t(e_1)=s(e_2),\quad t(e_2)=s(e_3),\quad \ldots,\quad t(e_{n-1})=s(e_n).
\end{equation}
The beginning and the end of a vertex is the vertex itself.
The beginning $s(p_n)$ of $p_n$
is $s(e_1)$ and the end $t(p_n)$ of $p_n$ is $t(e_n)$. If $s(p_n)=t(p_n)$, we call $p_n$ 
a \emph{loop}, or a \emph{cycle}. An $R$-labelled  graph without loops is called acyclic.
The length of a path is the length of the sequence. Every edge is a path of length~$1$, and
vertices are considered as finite paths of length~$0$. The set of all paths in $E$ of length $k$ is denoted by~$FP_{k}(E)$, and the set of all finite 
paths in $E$ by~$FP(E)$.
Infinite paths are sequences of edges satisfying $t(e)=s(f)$ whenever the edge $f$ follows the edge $e$, 
and going to infinity from the initial edge, 
arriving from infinity to the final edge, or being infinite in both directions. Undirected paths are paths in which the condition 
 $t(e)=s(f)$ whenever the edge $f$ follows the edge $e$ is replaced by the requirement that 
$\{s(e),t(e)\}\cap\{s(f),t(f)\}\neq\emptyset$ whenever the edge $f$ follows the edge~$e$. We say that an $R$-labelled 
 graph is \emph{connected} when
any two vertices are connected by a finite (in the sense as above) undirected path.

\begin{theorem}[\cite{ht19,c-a20}]\label{oldthm}
Let $E$ be an acyclic directed graph with $N\geq 1$ edges, and let 
\[\nonumber
1\leq k\leq N=:nk+r,\quad
0\leq r\leq k-1.
\]
Then there are at most
\[\nonumber
\boxed{
P^N_k:=(n+1)^rn^{k-r}
}
\]
different paths of length $k$, and the bound is optimal.
\end{theorem}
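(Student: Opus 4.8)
The plan is to prove the two directions separately: first exhibit an acyclic graph with $N$ edges that attains $P^N_k$ (optimality of the bound), and then show that no such graph can exceed it (the upper bound).

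For optimality I would use the \emph{caterpillar of bundles} $C$: take vertices $v_0,v_1,\dots,v_k$ and, for each $i$, join $v_{i-1}$ to $v_i$ by $a_i$ parallel edges, where $a_1,\dots,a_k$ are positive integers with $a_1+\cdots+a_k=N$. This graph is acyclic and has $N$ edges, and since every edge moves strictly from $v_{i-1}$ to $v_i$, a path of length $k$ must run $v_0\to v_1\to\cdots\to v_k$, choosing one edge from each bundle; hence $|FP_k(C)|=a_1a_2\cdots a_k$. It then remains to maximize this product over compositions of $N$ into $k$ positive parts. A one-line exchange argument (if two parts differ by at least $2$, moving a unit from the larger to the smaller strictly increases the product, since $(a+1)(b-1)>ab$ when $a<b-1$) shows the maximum is reached exactly when the parts are as balanced as possible, i.e.\ $r$ of them equal $n+1$ and the remaining $k-r$ equal $n$; as $k\le N$ forces $n\ge 1$, all parts are admissible. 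This produces the value $(n+1)^r n^{k-r}=P^N_k$ and settles optimality.

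For the upper bound I would first reformulate the count. Fixing a topological order on the vertices of an acyclic $E$, let $A$ be the matrix whose entry $A_{uw}$ is the number of edges from $u$ to $w$; then $A$ is strictly upper triangular, $\sum_{u,w}A_{uw}=N$, and $|FP_k(E)|=\mathbf{1}^\top A^k\mathbf{1}=\sum_{v_0,\dots,v_k}A_{v_0v_1}A_{v_1v_2}\cdots A_{v_{k-1}v_k}$, the sum running over strictly increasing chains. The theorem thus becomes an optimization problem: maximize $\mathbf{1}^\top A^k\mathbf{1}$ over strictly upper triangular matrices with nonnegative integer entries summing to $N$. The goal is to show the maximum is attained by a path of bundles, after which the first paragraph finishes the proof. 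The core of the argument is a \emph{compression} step: starting from any maximizer, I would transform the underlying graph, one local move at a time, into a single caterpillar without ever decreasing $\mathbf{1}^\top A^k\mathbf{1}$, via moves such as (i) deleting vertices and edges lying on no path of length $k$, (ii) merging vertices occupying the same grading position when this does not lower the count, and (iii) redirecting edges so as to concentrate the edge-mass of each level onto a single source and target. To organize this I would use the grading $\alpha(e):=1+(\text{length of the longest path ending at }s(e))$, which I can show strictly increases along every path, so the edges $e_1,\dots,e_k$ of any length-$k$ path carry strictly increasing values $\alpha(e_1)<\cdots<\alpha(e_k)$ with $\alpha(e_i)\ge i$; this both preserves acyclicity under the moves and supplies a monovariant (e.g.\ the number of vertices) guaranteeing termination.

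The main obstacle is precisely move (iii): a single redirection alters the multiplicities along \emph{many} chains simultaneously, so verifying that the net effect on $\mathbf{1}^\top A^k\mathbf{1}$ is nonnegative is the delicate point. I expect this to reduce to a rearrangement inequality asserting that, with the per-level edge totals held fixed, the chain-sum does not decrease when the edges of each level are concentrated between one pair of vertices and these pairs are aligned into a single chain — this is where I would invoke, or reprove, the auxiliary results of Chirvasitu cited as Theorem~\ref{acthm} and Lemma~\ref{aclem}. Once compression is complete the graph is a path of bundles, on which the number of length-$k$ paths is a sum of products of $k$ consecutive bundle sizes; a short argument shows such a sum is maximized by a single block of $k$ bundles (regrouping $N_0+N_k$ type terms collapses any extra length), returning us to the balanced-composition computation of the first paragraph and hence to the bound $P^N_k$.
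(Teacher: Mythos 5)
Your optimality half is complete and correct: the bundled path $v_0\to v_1\to\cdots\to v_k$ with bundle sizes summing to $N$, together with the exchange $(a+1)(b-1)>ab$ for $a<b-1$, does produce $P^N_k=(n+1)^r n^{k-r}$, and since $k\le N$ forces $n\ge 1$ all parts are positive. The matrix reformulation $|FP_k(E)|=\mathbf{1}^\top A^k\mathbf{1}$ with $A$ strictly upper triangular is also a valid restatement of the problem.

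The upper bound, however, has a genuine gap at exactly the point you flag and then defer: move (iii). Your entire compression scheme rests on the claim that, with per-level edge totals held fixed, concentrating each level's edges between a single pair of vertices and aligning these into one chain never decreases $\mathbf{1}^\top A^k\mathbf{1}$ --- but this rearrangement inequality is neither stated precisely nor proved, and it does not follow from Theorem~\ref{acthm} or Lemma~\ref{aclem} as you hope. Those results concern a different and much simpler move: when two edges $e,f$ lie on \emph{no common} path of length $k$, delete $f$ and add its multiplicity (label) to $e$; the effect on the count is the one-line linearity computation $l(f)\bigl(\sum_{\alpha\in S_e}ct_{\cancel{e}}(\alpha)-\sum_{\beta\in S_f}ct_{\cancel{f}}(\beta)\bigr)\ge 0$ after choosing $e$ with the larger exclusive sum (this is precisely the argument of Lemma~\ref{lemma1}). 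Your redirection move, by contrast, alters edges that \emph{do} co-occur on common $k$-paths, where the count is no longer linear in the mass being moved, so no such comparison is available; moreover, your moves are not shown to terminate at a caterpillar without it. The standard route (the one this paper's machinery reconstructs) bypasses level-by-level concentration entirely: iterate the Chirvasitu exchange until every two edges share a $k$-path, invoke the classification that an acyclic graph with this property is a single path $(e_1,\ldots,e_k)$ (Lemma~\ref{aclem}), and only then run your balancing computation on the $k$ bundle sizes. Replacing your moves (ii) and (iii) by that exchange, and your grading argument by the classification lemma, would close the gap; as written, the heart of the upper bound is missing.
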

\noindent
Since the number $P^N_k$ corresponds to all possible combinations of dealing the deck of $N$ cards to $k$ players, we call it
the \emph{card-deck} number.

\begin{lemma}\label{repeat}
Let $E$ be an $R$-labelled directed graph. If any edge repeats itself in a path $(e_1,\ldots,e_k)$ in~$E$, or if $|\{t(e_i)\}_1^k|<k$,
then there is a loop of length less than $k$.
\end{lemma}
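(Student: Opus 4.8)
The plan is to reduce both hypotheses to a single combinatorial fact, namely that there exist indices $1\leq i<j\leq k$ with $t(e_i)=t(e_j)$, and then to extract an explicit short loop from the stretch of the path between them. First I would dispense with the edge-repetition hypothesis by observing that it is a special case of the vertex hypothesis: if $e_i=e_j$ for some $i\neq j$, then in particular $t(e_i)=t(e_j)$, so the multiset of targets $\{t(e_i)\}_1^k$ already has a coincidence and $|\{t(e_i)\}_1^k|<k$. Thus it suffices to treat the case $|\{t(e_i)\}_1^k|<k$. Here I would invoke the pigeonhole principle: the $k$ targets $t(e_1),\ldots,t(e_k)$ take fewer than $k$ distinct values in $E^0$, so two of them agree, i.e. $t(e_i)=t(e_j)$ for some $i<j$.

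Next I would build the loop. Given $i<j$ with $t(e_i)=t(e_j)$, consider the subpath $q:=(e_{i+1},\ldots,e_j)$, which is a genuine path because the defining incidence conditions $t(e_m)=s(e_{m+1})$ are inherited from the original path. Its beginning is $s(q)=s(e_{i+1})=t(e_i)$, again by the incidence condition, and its end is $t(q)=t(e_j)$. Since $t(e_i)=t(e_j)$, we get $s(q)=t(q)$, so $q$ is a loop by definition. Its length is $j-i$, and since $1\leq i<j\leq k$ we have
\[
1\leq j-i\leq k-1<k,
\]
so $q$ is a loop of length strictly less than $k$, as claimed.

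This argument is short, so the substance lies entirely in bookkeeping rather than in any genuine obstacle. The one point requiring care is the nonemptiness and strict shortness of the extracted loop: I must keep the loop nontrivial (length $j-i\geq 1$, which holds precisely because $i<j$, ruling out the degenerate length-$0$ loop given by a single vertex) while simultaneously guaranteeing length at most $k-1$ (which holds because the widest possible index gap $i=1,\ j=k$ still yields $j-i=k-1$). The only other subtlety is confirming that $(e_{i+1},\ldots,e_j)$ really is a subtuple of the given path with $i+1\leq j$, which is immediate from $i<j$. I expect no deeper difficulty, since the result is a direct pigeonhole-and-index argument once the two hypotheses are unified through the observation that a repeated edge forces a repeated target vertex.
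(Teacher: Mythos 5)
Your proof is correct and follows essentially the same route as the paper: the loop you extract, $(e_{i+1},\ldots,e_j)$ from a coincidence $t(e_i)=t(e_j)$ with $i<j$, together with the bound $1\leq j-i\leq k-1$, is exactly the paper's construction for the second hypothesis. The only difference is a harmless streamlining: where the paper treats a repeated edge $e_i=e_j$ as a separate case, extracting the loop $(e_i,\ldots,e_{j-1})$ via $s(e_i)=s(e_j)=t(e_{j-1})$, you fold that case into the vertex case by observing $e_i=e_j\Rightarrow t(e_i)=t(e_j)$.
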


\begin{proof}
Let $p:=(e_1,\ldots,e_{k})$ be a path of length~$k$.  
If  $e_i=e_j$ for $i<j$, then
\[
s(e_i)=s(e_j)=t(e_{j-1}),
\]
so the path $p_{ij}:=(e_i,\ldots,e_{j-1})$ is a loop:
\[
s(p_{ij})=s(e_{i})=t(e_{j-1})=t(p_{ij}).
\]
The smallest possible $i$  is $1$ and the largest possible  $j$ is $k$, so
 the maximal length of the above loop is $k-1<k$.
Much in the same way, if  $t(e_i)=t(e_j)$ for $i<j$, then $(e_{i+1},\ldots,e_j)$ is a loop of length $j-i\leq k-1$.
\end{proof}

\begin{theorem}\label{permutation}
Let $E$ be an $R$-labelled directed graph. If there exists a path p that is finite, or infinite in one direction,
and whose edges can be non-trivially rearranged (permuted) into a path, then there exists a loop in $E$.
\end{theorem}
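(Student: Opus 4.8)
The plan is to argue by contraposition: assuming $E$ has no loop, I would show that the only path obtainable by permuting the edges of $p$ is $p$ itself, so that no non-trivial rearrangement can exist. Write $p=(e_1,e_2,\ldots)$ (a finite tuple of length $n$, or a one-sided infinite tuple), and set $v_0:=s(e_1)$ and $v_i:=t(e_i)=s(e_{i+1})$. First I would record the rigidity forced by acyclicity. If some edge repeated, Lemma~\ref{repeat} would already produce a loop, so all the $e_i$ may be assumed distinct; likewise, if $v_i=v_j$ with $i<j$, then $(e_{i+1},\ldots,e_j)$ would be a loop, so the vertices $v_0,v_1,\ldots$ are pairwise distinct.

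Next I would isolate the starting vertex intrinsically, that is, from the edge set alone, so that it does not depend on the chosen ordering. Since $t(e_i)=v_i$ for $i\geq 1$ and $s(e_i)=v_{i-1}$, the vertices occurring as a target of some edge of $p$ form the set $\{v_1,v_2,\ldots\}$, while those occurring as a source form $\{v_0,v_1,\ldots\}$; by distinctness, $v_0$ is the unique vertex that is a source of some edge but the target of none. Moreover, $s(e_j)=v_{i-1}$ forces $v_{j-1}=v_{i-1}$ and hence $j=i$, so for each $i$ the edge $e_i$ is the only edge of $p$ whose source is $v_{i-1}$.

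Now let $q=(f_1,f_2,\ldots)$ be any path using exactly the edges of $p$, in the finite or right-infinite case. Applying the previous paragraph to $q$, its initial vertex $s(f_1)$ is again the unique source-only vertex of the common edge set, so $s(f_1)=v_0$; since $e_1$ is the only edge with source $v_0$, this gives $f_1=e_1$. Inductively, if $f_j=e_j$ for all $j\leq i$, then $s(f_{i+1})=t(f_i)=v_i$, and uniqueness of the out-edge at $v_i$ yields $f_{i+1}=e_{i+1}$. Hence $q=p$, so no non-trivial rearrangement is possible, a contradiction. The remaining case, a path infinite to the left (arriving from infinity), I would reduce to the right-infinite case by reversing every edge, i.e.\ swapping $s$ and $t$; this operation preserves acyclicity, loops, paths, and rearrangements, and turns a left-infinite path into a right-infinite one.

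The main difficulty here is conceptual rather than computational: the crux is recognizing that in an acyclic graph a path is the unique ordering of its own edge set, which rests on the two uniqueness statements above (a unique source-only start vertex, and a unique out-edge at each visited vertex). I would also flag that the hypothesis deliberately excludes doubly-infinite paths, and indeed the induction genuinely needs a distinguished endpoint as its seed.
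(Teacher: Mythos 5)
Your proof is correct, but it takes a genuinely different route from the paper. The paper argues directly: given a non-trivial bijection $\sigma$, it takes the smallest index $j$ with $\sigma(j)\neq j$, shows $\sigma(j)>j$ and $\sigma^{-1}(j)>j$, and then splices a subpath of $p_\sigma$ with a subpath of $p$ to exhibit an explicit loop, namely $(e_{\sigma(j)},\ldots,e_j,e_{j+1},\ldots,e_{\sigma(j)-1})$. You instead argue by contraposition, proving a rigidity statement: in a loop-free graph the edges and visited vertices of such a path are pairwise distinct, the initial vertex is intrinsically characterized as the unique source-only vertex of the edge set, and each visited vertex carries a unique outgoing edge among the path's edges, whence by induction from the distinguished endpoint the path is the \emph{unique} ordering of its own edge set. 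Both proofs handle the left-infinite case identically, via the opposite graph. What each approach buys: the paper's argument is constructive, producing the loop explicitly and locating it within the path; yours is more conceptual and proves a stronger, reusable fact --- precisely the rigidity that the paper invokes immediately after the theorem (that in an acyclic graph a finite path can be identified with its underlying subgraph, the order of edges being uniquely fixed), which in the paper requires a small extra observation but falls out of your argument for free. Your closing remark that the induction genuinely needs a distinguished endpoint, so that doubly-infinite paths must be excluded, is also well taken: a shift of a bi-infinite simple path is a non-trivial rearrangement in a loop-free graph, so the hypothesis is not merely convenient.
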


\begin{proof}
For starters, note that a path infinite in one direction in $E$ is a path infinite in the opposite direction in the
opposite graph  $E^{op}$, i.e.\ the graph in which the source and the target maps are interchanged. 
Furthermore, a non-trivial permutation of edges in a path in $E$ is tantamount to
a non-trivial permutation of edges in this path considered as a path in $E^{op}$. 
Also, any loop in $E$ is  a loop in~$E^{op}$ and vice versa.
Assume now that having a path $(e_i)_{i\in\mathbb{N}}$ in $E$ that can be non-trivially permuted into
a path in $E$ implies that there is a loop in~$E$. Then having a path $(f_j)_{j\in -\mathbb{N}}$ in $E^{op}$ 
that can be non-trivially permuted into
a path in $E^{op}$ implies that there is a loop in~$E^{op}$. Therefore, as for any graph $F$ we have
$F=(F^{op})^{op}$, we can assume without the loss of generality that our path is finite or has edges 
indexed by~$\mathbb{N}$.

Let $S$ be a subset of $\mathbb{N}$ containing at least two elements, and let $\sigma: S\to S$ be a bijection that is not the identity.
Since $\sigma\neq\id$, there exist the smallest $j\in S$ such that $\sigma(j)\neq j$. As $\sigma$ is bijective, $\sigma(j)>j$. 
Indeed, if $j$ is the smallest
element of $S$, we are done. If there is $i<j$, then $\sigma(j)\neq\sigma(i)=i$, so $\sigma(j)>j$. 
Furthermore,   $\sigma^{-1}(j)\neq j$. If $\sigma^{-1}(j)< j$, then
we get a contradiction: $j=\sigma(\sigma^{-1}(j))=\sigma^{-1}(j)< j$. Therefore, also  $\sigma^{-1}(j)> j$.
 
 Next, let $p:=(e_1,...,e_n,...)$ or $p:=(e_1,...,e_n)$ with $n\geq 2$. Now,  let $S:=\mathbb{N}$ or $S:=\{1,...,n\}$, 
respectively. Suppose  that 
 $p_\sigma:=(e_{\sigma(1)},...,e_{\sigma(n)},...)$ or $p_\sigma:=(e_{\sigma(1)},...,e_{\sigma(n)})$ is again a 
path for a bijection $\sigma\neq\id$.
 Then $(e_j,\ldots,e_{\sigma^{-1}(j)})$ is a subpath of~$p$, so $(e_{\sigma(j)},...,e_j)$ is a subpath of 
$p_\sigma$. If  $\sigma(j)=j+1$, then the path $(e_{\sigma(j)},...,e_{j})=(e_{j+1},...,e_j)$
 is already a loop. If $\sigma(j)>j+1$, then, following this path with the
 path $(e_{j+1},\ldots,e_{\sigma(j)-1})$, we obtain a loop: 
$(e_{\sigma(j)},...,e_j,e_{j+1},\ldots,e_{\sigma(j)-1})$.
\end{proof}

\begin{definition}[\cite{c-a20}]
Let $E$ be a finite $R$-labelled directed graph.
The elements of $R$
\[\nonumber
ct(E):=\prod_{e\in E^{1}}l(e),\quad wt(E):=\sum_{e\in E^{1}}l(e),
\]
are called the content and the weight of $E$, respectively.
Furthermore, for $S\subseteq E^{1}$, the $S$-exclusive content of $E$ is
\[\nonumber
ct_{\cancel{S}}(E):=\prod_{e\in E^{1}\setminus S}l(e). 
\]
\end{definition}

\begin{definition}[\cite{c-a20}]
Let $E$ be a finite  $R$-labelled directed graph and $\Gamma$
 be a fixed finite directed graph without repeated edges. We define
\[\nonumber
ct^{\Gamma}_{R}(E):=\sum_{\gamma}ct(\gamma),
\]
with $\gamma$ ranging over the $R$-labelled subgraphs of $E$ isomorphic to $\Gamma$ as directed graphs.
\end{definition}

\begin{definition}
Let $E$ be a finite  $R$-labelled directed graph, and $k\in\mathbb{N}\setminus\{0\}$. We define
\[\label{pathct}
ct^{k}_{R}(E):=\!\!\!\sum_{p\in FP_{k}(E)}\!\!\!ct(p),\;\;\; ct\big((e_{1},
\ldots,e_{k})\big):=\prod_{i=1}^{k}l(e_{i}),
\;\;\; ct_{\cancel {e_j}}\big((e_{1},\ldots,e_{k})\big):=\!\!\!\prod_{i\in \{1,\dots k\}\setminus\{j\}}\!\!\!l(e_{i}).
\]
\end{definition}
\noindent
Note that even if  a finite  $R$-labelled  graph $E$ is not acyclic, so $FP(E)$ is infinite, $FP_{k}(E)$ is still 
finite for any $k$, 
so the sum in \eqref{pathct}
is finite.

Furthermore, to maximize $ct^{\Gamma}_{R}(E)$ for finite $R$-labelled graphs $E$ with a fixed weight $N$ 
as in \cite[Theorem~3.2]{c-a20} (which should be more precisely formulated than it is), we need the 
following fundemental concept:
\begin{definition}[\cite{c-a20}]\label{Chirvasitu}
Let $\Gamma$ be a fixed finite directed graph without repeated edges and containing at least one edge,
and let $E$ be an $R$-labelled directed graph. We say that $E$ has the \emph{Chirvasitu property} for $\Gamma$
when any two edges of $E$ belong to some common embedded copy of $\Gamma\subseteq E$.
\end{definition}

 As we are interested in maximizing $ct^{k}_{R}(E)$ rather than $ct^{\Gamma}_{R}(E)$,
let us first
point out that, if an $R$-labelled  graph is not acyclic, then paths are not necessarily subgraphs. Indeed, in the graph
\begin{center}
\begin{tikzpicture}[auto,swap]
\tikzstyle{vertex}=[circle,fill=black,minimum size=3pt,inner sep=0pt]
\tikzstyle{edge}=[draw,->]
\tikzset{every loop/.style={min distance=20mm,in=130,out=50,looseness=50}}
    \node[vertex] (1) at (-1,0) {};
   \node[vertex] (2) at (0,0) {};
    \node[vertex] (3) at (0,-1) {};
    \node[vertex] (4) at (-1,-1) {};

    \path (1) edge [edge,above] node {$e_1$} (2) ;

    \path (2) edge [edge,right] node {$e_2$} (3);
    \path (3) edge [edge,below] node {$e_{3}$} (4);
  \path (4) edge [edge,left] node {$e_{4}$} (1);
\end{tikzpicture},
\end{center}
the two different paths $(e_1,\ldots,e_4,e_1,\ldots,e_4)$ and $(e_1,\ldots,e_4)$ have the same underlying subgraph.

However, due to Theorem~\ref{permutation}, any finite path in an acyclic $R$-labelled  graph
 can be identified with a subgraph because then the order of edges is uniquely fixed.
In this setting, we have the following result (which is a special case of Lemma~\ref{lemma2}):
\begin{lemma}\label{aclem}
 Let $E$ be a finite acyclic $R$-labelled directed graph such that 
\[\nonumber
s(E^1)\cup t(E^1)=E^0\neq \emptyset
\] 
and $p$ be a path in $E$ of length~$k\geq 1$. Then the only such graphs  satisfying the Chirvasitu property 
for $p$ with its labelling
omitted are paths $(e_{1},\ldots,e_{k})$.
\end{lemma}

\section{Counting paths in non-acyclic graphs}
\noindent
First, we need to prove \cite[Theorem~3.2]{c-a20} and Lemma~\ref{aclem} in the desired setting. Note that the proof of our path version
of \cite[Theorem~3.2]{c-a20} enjoys basically the same proof as the original theorem.
\begin{lemma}\label{lemma1}
Let $R$ be a  subsemiring of $\mathbb{R}_{\geq 0}$, $N\in R$, and $FG^{N}_{R}$ be
the set of all finite $R$-labelled directed graphs of weight $N$.
Then, 
\[ \nonumber
\sup_{E\in FG^{N}_{R}}\: ct^{k}_{R}(E),\quad\text{for any}\quad k\in\mathbb{N}\setminus\{0\},
\]
is zero or is achieved over graphs $E$ with the following property (the Chirvasitu property for paths):
\begin{gather}
   \text{Any two edges of $E$ belong to the same path of length $k$, i.e.}\label{pathac}\\
    \text{$e,f\in E^{1}\;\Longrightarrow\;\exists$ a path $p$ of length $k$ such that both edges $e$ and $f$ belong to $p$.}
 \nonumber \end{gather}
\end{lemma}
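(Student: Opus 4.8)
The plan is to show that every $E\in FG^{N}_{R}$ can be transformed, without lowering $ct^{k}_{R}(E)$ and while keeping the weight equal to $N$, into a graph satisfying the Chirvasitu Property for paths~\eqref{pathac}; since every member of $FG^{N}_{R}$ can thereby be replaced by one obeying \eqref{pathac} of no smaller content, the supremum over the whole class coincides with the supremum over the subclass obeying \eqref{pathac}. The engine of the transformation is a weight-redistribution between a pair of edges that fail to lie on a common path of length~$k$, and the argument runs in parallel to the proof of Theorem~\ref{acthm}.

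Concretely, suppose $E$ violates \eqref{pathac}, so there are edges $e,f\in E^{1}$ with $l(e)=a$ and $l(f)=b$ such that no path of length~$k$ contains both $e$ and $f$. Fixing every label other than those of $e$ and $f$, and keeping $a+b=:c$ constant, I would view $ct^{k}_{R}(E)$ as a function of $a$. Splitting $\sum_{p\in FP_{k}(E)}ct(p)$ according to whether a path $p$ contains $e$, contains $f$, or neither, and recording the multiplicities with which $e$ and $f$ occur in $p$, yields
\[\nonumber
ct^{k}_{R}(E)=K+A(a)+B(b),
\]
where $K$ collects the paths avoiding both $e$ and $f$, while $A(a)=\sum_{p}C_{p}\,a^{m_{p}}$ and $B(b)=\sum_{p}D_{p}\,b^{n_{p}}$ are polynomials with non-negative coefficients (products of the remaining labels) and vanishing constant term. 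The crucial point is the absence of any mixed term $a^{i}b^{j}$ with $i,j\geq 1$: such a term could arise only from a path of length~$k$ containing both $e$ and $f$, which by assumption does not exist.

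Substituting $b=c-a$, the function $g(a):=A(a)+B(c-a)$ is a sum of convex functions on $[0,c]$, hence convex, so its maximum is attained at an endpoint $a=0$ or $a=c$. By convexity $g(a)\leq\max\{g(0),g(c)\}$ for the original value of $a$, so choosing the endpoint at which $g$ is no smaller amounts to deleting one of the two edges (the one whose label is sent to $0$) and transferring its weight to the other. Since $c=a+b\in R\setminus\{0\}$ and no new edges are created, this produces a graph $E'\in FG^{N}_{R}$ of the same weight, still satisfying the uniqueness condition, with $ct^{k}_{R}(E')\geq ct^{k}_{R}(E)$. As $E'$ has strictly fewer edges than $E$, iterating the step terminates after finitely many steps at a graph that admits no violating pair, i.e.\ one satisfying \eqref{pathac}.

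The main obstacle is the bookkeeping in the decomposition $ct^{k}_{R}(E)=K+A(a)+B(b)$: because $E$ need not be acyclic, a single path of length~$k$ may traverse $e$ (or $f$) several times, so one must track the exact multiplicities $m_{p},n_{p}\in\{1,\dots,k\}$ rather than treat the content as multilinear in the labels, as one could in the subgraph setting of Theorem~\ref{acthm}. Once this is handled, the non-negativity of the coefficients and the absence of mixed terms are precisely what force the convexity of $g$ and the endpoint conclusion, so the rest of the argument proceeds exactly as for Theorem~\ref{acthm}.
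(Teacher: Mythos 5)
Your proof is correct, and it follows the same global strategy as the paper's: repeatedly pick a pair $e,f$ violating \eqref{pathac}, move the entire weight of one edge onto the other, delete the emptied edge, and iterate until no violating pair remains, with termination guaranteed by the strict decrease of $|E^1|$. Where you differ is in how the key inequality $ct^{k}_{R}(E')\geq ct^{k}_{R}(E)$ is established. The paper assumes WLOG $\sum_{\alpha\in S_{e}}ct_{\cancel{e}}(\alpha)\geq\sum_{\beta\in S_{f}}ct_{\cancel{f}}(\beta)$ and computes the difference as $l(f)\bigl(\sum_{\alpha\in S_{e}}ct_{\cancel{e}}(\alpha)-\sum_{\beta\in S_{f}}ct_{\cancel{f}}(\beta)\bigr)$, a computation whose displayed equalities treat $ct^{k}_{R}$ as linear in $l(e)$, i.e., tacitly assume each $k$-path traverses $e$ at most once; as you note, this can fail under the hypotheses of Lemma~\ref{lemma1}, which impose no restriction on loop lengths, so a $k$-path may repeat an edge. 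Your decomposition $ct^{k}_{R}(E)=K+A(a)+B(c-a)$ with non-negative coefficients, exponents $m_{p},n_{p}\geq 1$, and no mixed terms (the point at which the violation hypothesis enters, playing the role of $S_{e}\cap S_{f}=\emptyset$ in the paper) handles the multiplicities head-on: convexity of $g(a)=A(a)+B(c-a)$ on $[0,c]$ forces the maximum to an endpoint, which simultaneously subsumes the repeated-edge issue and replaces the paper's WLOG comparison by automatic endpoint selection. The paper's conclusion does survive its linearity lapse, since after the merge a path through $e$ with multiplicity $m$ has content $(l(e)+l(f))^{m}\cdot(\cdots)\geq(l(e)+l(f))\,ct_{\cancel{e}}(\alpha)$, so its equality for $ct^{k}_{R}(E')$ holds as the inequality $\geq$, which is all that is needed; but your convexity route gets the step rigorously with no correction required, at the modest cost of the extra polynomial bookkeeping. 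The remaining checks in your write-up (new label $a+b\in R\setminus\{0\}$, weight preserved, uniqueness condition intact) match the paper's.
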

\begin{proof}
The case $N=0$ is trivial, so we assume that $N>0$.  Next, the supremum is zero if and only if there are no graphs
with paths of length~$k$, so we can further 
assume that $E$ has at least one path of length~$k$.
Now, let $e,f\in E^{1}$, 
and let us suppose that they do not belong to the same path of length~$k$. 
Next, let $S_{e}$ and $S_{f}$ be the sets of all $k$-paths containing $e$ and $f$, respectively. Then $S_{e}\cap S_{f}=\emptyset$.
Furthermore, observe that
\begin{align}\nonumber
ct^{k}_{R}(E)&=\sum_{\alpha\in S_{e}}ct(\alpha)+\sum_{\beta\in S_{f}}ct(\beta)+\sum_{\gamma\notin S_{e}\cup S_{f}}ct(\gamma)
\\ &=
l(e)\sum_{\alpha\in S_{e}}ct_{\cancel{e}}(\alpha)+l(f)\sum_{\beta\in S_{f}}ct_{\cancel{f}}(\beta)+
\sum_{\gamma\notin S_{e}\cup S_{f}}ct(\gamma)
\end{align}
and assume, without  loss of generality,  that
\[
\sum_{\alpha\in S_{e}}ct_{\cancel{e}}(\alpha) \geq\sum_{\beta\in S_{f}}ct_{\cancel{f}}(\beta).
\]

Now we produce a graph $E’$ of the same weight $N$  by eliminating the edge $f$ and adding its label to the label of the edge $e$,
so that the label of $e$ is now $l(e)+l(f)$.
The $k$-content of  $E'$  is
\[
ct^{k}_{R}(E')=\big(l(e)+l(f)\big)\sum_{\alpha\in S_{e}}ct_{\cancel{e}}(\alpha)+\sum_{\gamma\notin S_{e}\cup S_{f}}ct(\gamma).
\]
Finally, we  compute:
\begin{align}
\nonumber 
ct^{k}_{R}(E')-ct^{k}_{R}(E)&=\big(l(e)+l(f)\big)\sum_{\alpha\in S_{e}}ct_{\cancel{e}}(\alpha) 
-l(e)\sum_{\alpha\in S_{e}}ct_{\cancel{e}}(\alpha) -l(f)\sum_{\beta\in S_{f}}ct_{\cancel{f}}(\beta)\\
&= l(f)\Big(\sum_{\alpha\in S_{e}}ct_{\cancel{e}}(\alpha)-\sum_{\beta\in S_{f}}ct_{\cancel{f}}(\beta)\Big)\geq 0.
\end{align}

Hence, whenever we have a finite $R$-labelled  graph with a pair of edges that do not belong to the same path of length $k$, 
we can always eliminate one of these two edges to 
create a new  finite $R$-labelled  graph with at least the same $k$-content. Since the number of edges is finite, 
we finally arrive at a finite $R$-labelled  graph satisfying~\eqref{pathac}.
\end{proof}

Also the following lemma is just a variation of \cite[Lemma 3.3]{c-a20} adapted to 
    our goals. 
\begin{lemma}\label{lemma2}
Let $k\in\mathbb{N}\setminus\{0\}$ and let $E$ be a finite $R$-labelled directed graph without loops shorter than~$k$
and such that 
\[\label{connect}
s(E^1)\cup t(E^1)=E^0\neq\emptyset.
\]
Then, the only such  graphs satisfying \eqref{pathac} 
(the Chirvasitu property for 
paths of length $k$) are path $(e_1,\ldots,e_k)$ directed graphs such 
that $|\{t(e_i)\}_{i=1}^{k}|=k$, or loop  $(f_1,\ldots,f_m)$ directed graphs such
 that $k\leq m\leq 2k-1$ and $|\{t(f_{i})\}_{1}^{m}|=m$.
 \end{lemma}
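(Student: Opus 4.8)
The plan is to show that the hypotheses force every vertex of $E$ to have in-degree and out-degree at most one, so that $E$ decomposes into disjoint simple directed paths and simple directed cycles; the Chirvasitu Property \eqref{pathac} then isolates a single component of the correct length. Throughout I will lean on the key consequence of Lemma~\ref{repeat}: since $E$ has no loop shorter than $k$, every path $(e_1,\ldots,e_k)$ of length $k$ in $E$ has pairwise distinct edges and satisfies $|\{t(e_i)\}_1^k|=k$, i.e.\ its $k$ target vertices are distinct. I also note that \eqref{connect} forces $E^1\neq\emptyset$ and leaves $E$ with no isolated vertices.

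First I would bound the out-degree. Suppose two distinct edges $e,e'$ share a source $v$. By \eqref{pathac} they lie on a common path $p=(e_1,\ldots,e_k)$; write $e=e_i$, $e'=e_j$ with $i<j$ (the positions are distinct because the edges of $p$ are distinct). Since $j\geq 2$, we have $t(e_{j-1})=s(e_j)=v$. If $i\geq 2$, then likewise $t(e_{i-1})=s(e_i)=v=t(e_{j-1})$ with $i-1<j-1$, contradicting distinctness of targets. If $i=1$, then $s(e_1)=v=t(e_{j-1})$, so $(e_1,\ldots,e_{j-1})$ is a loop of length $j-1\leq k-1<k$, contradicting the no-short-loops hypothesis. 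Hence every vertex has out-degree at most one. The in-degree bound is even shorter: if distinct $e,e'$ share a target $w$, placing them on a common $k$-path as $e_i,e_j$ with $i<j$ gives $t(e_i)=t(e_j)=w$, again contradicting distinctness of targets. So every vertex of $E$ has in- and out-degree at most one.

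A directed graph in which every vertex has in- and out-degree at most one is a disjoint union of simple directed paths and simple directed cycles. The edges of any single path of length $k$ lie in one connected component, so \eqref{pathac} forces all edges of $E$ into one component; together with \eqref{connect} (no stray vertices) this means $E$ is exactly one simple directed path or one simple directed cycle. In a simple path with $L$ edges, the paths of length $k$ are the contiguous windows of $k$ consecutive edges, and applying \eqref{pathac} to the two extreme edges forces both to sit in one such window, giving $L\leq k$; existence of a $k$-path (guaranteed by \eqref{pathac} once $|E^1|\geq 2$, while $|E^1|=1$ forces $k=1$ since a single non-loop edge is a $k$-path only for $k=1$) gives $L\geq k$, hence $L=k$, which is the first listed family. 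In a simple cycle with $m$ edges the no-short-loops hypothesis gives $m\geq k$, and here the main computation lives: two edges at cyclic gap $d$ fit in a common window of $k$ consecutive edges iff $d\leq k-1$ or $m-d\leq k-1$, so \eqref{pathac} fails for the pair at gap $d=k$ precisely when $m-k\geq k$; thus it holds for every pair iff $m\leq 2k-1$. This yields $k\leq m\leq 2k-1$, the second listed family, and in both families the distinctness $|\{t(\cdot)\}|$ equal to the length is automatic for simple paths and simple cycles.

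The step I expect to require the most care is the cyclic counting: one must first use out-degree one to argue that in a simple cycle of length $m\geq k$ every path of length $k$ is an arc of $k$ consecutive edges (the unique out-edge at each vertex forces the walk to follow the cycle), and then correctly identify $d=k$ as the smallest cyclic gap that no window of length $k$ can cover, which places the threshold at $m=2k$ and hence gives the sharp bound $m\leq 2k-1$.
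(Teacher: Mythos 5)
Your proof is correct, but it takes a genuinely different route from the paper's in its second half, so let me compare. The first step coincides in substance: the paper forbids the subgraphs $\Lambda$ and $V$, which is exactly your claim that every vertex has in- and out-degree at most one, and your derivation of this from Lemma~\ref{repeat} (distinct edges and distinct targets on any $k$-path, plus the short-loop contradiction in the case $i=1$) is actually more detailed than the paper's one-line assertion that a loop of length at most $k-1$ based at $v$ would be required. From there the paper argues constructively: it starts from a $k$-path guaranteed by \eqref{pathac}, shows that when $|E^1|>k$ the base path must be open, and then inductively attaches the remaining edges one at a time at the endpoints, halting at $m=2k$ because neither extension satisfies \eqref{pathac} any longer. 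You instead invoke the global structure of finite digraphs with in- and out-degree at most one --- a disjoint union of simple directed paths and simple directed cycles, cut down to a single component by \eqref{pathac} together with \eqref{connect} --- and settle the cycle case by a direct covering computation: edges at forward gap $d$ share a window of $k$ consecutive edges iff $d\leq k-1$ or $m-d\leq k-1$, and a bad gap $k\leq d\leq m-k$ exists iff $m\geq 2k$, yielding $m\leq 2k-1$ in one stroke. Your route buys a cleaner, non-inductive argument in which the threshold $m\leq 2k-1$ falls out of an explicit inequality rather than a halting procedure; the paper's route buys an explicit picture of how the extremal graphs are assembled edge by edge, which it then reuses implicitly in the proof of Theorem~\ref{main}. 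One small reading note: under the paper's interpretation of \eqref{pathac}, which allows $e=f$, every edge already lies on some $k$-path, so your side case $|E^1|=1$ and the lower bound $L\geq k$ in the simple-path case follow even more directly than you argue.
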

\begin{proof}
Suppose that $E$ contains one of the following subgraphs:
\begin{center}
\begin{tikzpicture}[auto,swap]
\tikzstyle{vertex}=[circle,fill=black,minimum size=3pt,inner sep=0pt]
\tikzstyle{edge}=[draw,->]
\tikzset{every loop/.style={min distance=20mm,in=130,out=50,looseness=50}}
    \node[vertex,label=above:$v$] (0) at (-1.5,0) {};
    \node[vertex] (2) at (-1,-1) {};
    \node[vertex] (3) at (-2,-1) {};

    \path (2) edge [edge,above] node {$\;\;\;f$} (0) ;
    \path (3) edge [edge,above] node {$e\;\;\;$} (0);

\tikzstyle{vertex}=[circle,fill=black,minimum size=3pt,inner sep=0pt]
\tikzstyle{edge}=[draw,->]
\tikzset{every loop/.style={min distance=20mm,in=130,out=50,looseness=50}}
    \node[vertex,label=below:$v$] (0) at (0.5,-1) {};
    \node[vertex] (2) at (1,0) {};
    \node[vertex] (3) at (0,0) {};

    \path (0) edge [edge,below] node {$\;\;\;f$} (2) ;
    \path (0) edge [edge,below] node {$e\;\;\;$} (3);
\end{tikzpicture}.
\end{center}
We call these subgraphs $\Lambda$ and $V$, respectively. For any of them, 
to have both $e$ and $f$ in the same path of length $k$ we must have a loop based at $v$ of length at most $k-1$,
which contradicts our assumption. Therefore, $E$ cannot contain such subgraphs.

Next, the combination of \eqref{pathac} with \eqref{connect} implies that $E$ must be a non-empty connected graph. Furthermore,
as $E^1\neq\emptyset$, by \eqref{pathac} $E$ must have at least one path of length~$k$. Hence, it must have at least $k$ edges 
and its paths of the form $(e_1,\ldots,e_k)$ must satisfy $|\{t(e_{i})\}_{1}^{k}|=k$ by 
Lemma~\ref{repeat} and the assumption that there are no loops shorter than~$k$. It follows that, if $|E^1|=k$, then $E$ is the graph
underlying a path $(e_1,\ldots,e_k)$, which is
either an open path of length $k$ without repeated edges or a loop of length $k$ satisfying  $|\{t(e_{i})\}_{1}^{k}|=k$.

If $|E^1|=m>k$, then our path $(e_1,\ldots,e_k)$ cannot be a loop as then there would be no way to attach any of the remaining $m-k$ edges
without creating a forbidden $\Lambda$ or $V$ subgraph. Hence,  $(e_1,\ldots,e_k)$ is an open path, and the only way to attach an edge to it
is at the beginning or the end. Now, the loose end of the attached edge can either remain loose or be identified with the opposite end of 
$(e_1,\ldots,e_k)$. Thus, either we have an open path of length $k+1$ without repeated edges or vertices, or a loop of length $k+1$
with $k+1$ different edges and $k+1$ different vertices. Next, if $m=k+1$, then only the latter case satisfies \eqref{pathac}. If $m>k+1$,
then only the former case allows us to attach one of the remaining $m-k-1$ edges. We repeat this reasoning inductively until we reach
$m=2k$. Then neither of the cases satisfies \eqref{pathac}, and the procedure halts exhausting all graphs claimed in the lemma.
\end{proof}

We are now ready to prove our main result:
\begin{theorem}\label{main}
Let $E$ be a directed graph with $N\geq 1$ edges, and let $1\leq k\leq N=:nk+r$,
$0\leq r\leq k-1$. 
Assume also that  there are no loops shorter than~$k$.
Then there are at most 
$kP^N_k$
different paths of length $k$, and the bound is optimal.
\end{theorem}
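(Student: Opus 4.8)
The plan is to prove Theorem~\ref{main} as the $R=\mathbb{N}$ case of the path-counting results above, working throughout inside the class $\mathcal{C}$ of weight-$N$ $\mathbb{N}$-labelled graphs having no loop shorter than $k$. Viewing the given directed graph with labels equal to its edge-multiplicities, its number of length-$k$ paths equals $ct^{k}_{\mathbb{N}}$, its number of edges equals $wt=N$, and it lies in $\mathcal{C}$ by hypothesis. The edge-elimination step in the proof of Lemma~\ref{lemma1} only deletes an edge and raises the multiplicity of another, so it can neither shorten nor create loops; hence it keeps us inside $\mathcal{C}$ while not decreasing $ct^{k}_{\mathbb{N}}$. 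Therefore it suffices to maximize $ct^{k}_{\mathbb{N}}$ over graphs in $\mathcal{C}$ satisfying Chirvasitu Property for paths~\eqref{pathac}, and after deleting isolated vertices Lemma~\ref{lemma2} tells us these are either an open path $(e_1,\dots,e_k)$ or a loop $(f_1,\dots,f_m)$ with $k\le m\le 2k-1$, carrying multiplicities $b_i$ summing to $N$.

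Next I would evaluate $ct^{k}_{\mathbb{N}}$ on each family. For the open path the only length-$k$ path-shape is the whole chain, so $ct^{k}_{\mathbb{N}}=\prod_{i=1}^{k}b_i\le P^N_k\le kP^N_k$. For the loop of length $m$ a path of length $k$ is determined by a starting slot $j$ together with a choice of one parallel edge in each of the $k$ consecutive slots $W_j:=\{j,j+1,\dots,j+k-1\}$ (indices mod $m$); as $k\le m$ these are $k$ distinct slots, whence
\[ \nonumber
ct^{k}_{\mathbb{N}}=\sum_{j=1}^{m}p_j,\qquad p_j:=\prod_{i\in W_j}b_i .
\]
In particular, for $m=k$ every window is the full product, so $ct^{k}_{\mathbb{N}}=k\prod_{i=1}^{k}b_i$, which is maximized at $kP^N_k$ by the balanced multiplicities ($r$ slots equal to $n+1$, the other $k-r$ equal to $n$).

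The crux is to show that no loop exceeds $kP^N_k$, and here I would split the $m$ windows according to whether they cross the seam between slot $m$ and slot $1$. The windows with $j\le m-k+1$ stay inside $\{1,\dots,m\}$, and there are $m-k+1$ of them; the remaining $k-1$ windows (with $m-k+2\le j\le m$) wrap around, this count being $k-1$ \emph{independently of} $m$. The non-wrapping windows are exactly the length-$k$ blocks of the open linear chain $1\to2\to\cdots\to m$, so $\sum_{j\le m-k+1}p_j$ is the number of length-$k$ paths in this acyclic graph of $N$ edges and is at most $P^N_k$ by Theorem~\ref{oldthm}. Each of the $k-1$ wrapping terms $p_j$ is a product of $k$ positive integers of sum $\le N$, hence at most $P^N_k$. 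Adding up,
\[ \nonumber
ct^{k}_{\mathbb{N}}=\sum_{j=1}^{m-k+1}p_j+\sum_{j=m-k+2}^{m}p_j\le P^N_k+(k-1)P^N_k=kP^N_k .
\]
Thus every admissible structure satisfies $ct^{k}_{\mathbb{N}}\le kP^N_k$, the value $kP^N_k$ being attained by the balanced loop of length $k$, which has $N$ edges and shortest loop of length exactly $k$, so it is admissible; tracing back through Lemmas~\ref{lemma1} and~\ref{lemma2} yields both the bound and its optimality.

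The main obstacle is precisely the loop sum $\sum_j p_j$: a term-by-term estimate $p_j\le P^N_k$ over all $m\le 2k-1$ windows gives only $(2k-1)P^N_k$, while an AM–GM bound on the window weights $w_j=\sum_{i\in W_j}b_i$ (using $\sum_j w_j=kN$, since each slot lies in exactly $k$ windows) yields the real-analytic value $k(N/k)^k$ but not the sharp integer constant. The wrap/non-wrap decomposition is what makes the argument tight: it isolates a genuinely acyclic sub-path on which Theorem~\ref{oldthm} applies verbatim and pins the surplus to exactly $k-1$ further windows, reproducing the factor $k$. I would double-check two points: that the non-wrapping windows really record the path-count of the acyclic chain with the correct edge total $N$, and that the extremal balanced $k$-loop has no loop shorter than $k$, so that it belongs to the class of the statement.
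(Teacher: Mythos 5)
Your proposal is correct, and the reduction steps (pass to $\mathbb{N}$-labels, apply Lemma~\ref{lemma1} while checking that merging labels and deleting an edge cannot create a loop shorter than $k$, then invoke Lemma~\ref{lemma2}) match the paper; your explicit verification that the elimination step stays in the no-short-loop class is a point the paper leaves implicit. Where you genuinely diverge is the crux. The paper handles the loops of length $k<m\le 2k-1$ by a second perturbation argument in the spirit of Lemma~\ref{lemma1}: it merges the minimal-label edge $f_1$ into its successor $f_2$ and shows by a telescoping computation that $ct^{k}_{\mathbb{N}}(E')-ct^{k}_{\mathbb{N}}(E)>0$ (using minimality of $l(f_1)$ and $k\ge 2$), so every loop longer than $k$ is strictly suboptimal; this reduces to the $k$-loop, which is then cut open and Theorem~\ref{oldthm} is applied once. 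You instead bound every admissible loop directly via the seam decomposition: the $m-k+1$ non-wrapping windows are exactly the $k$-paths of the cut-open acyclic chain with $N$ edges, so their sum is at most $P^N_k$ by Theorem~\ref{oldthm}, while each of the exactly $k-1$ wrapping windows is a product of $k$ positive integers of sum at most $N$, hence at most $P^N_k$, giving $kP^N_k$ uniformly in $m$. Your route is shorter, avoids the delicate strict-inequality computation and its $k\ge 2$ caveat, and the count $k-1$ of wrapping windows being independent of $m$ is the neat observation that makes it tight. What the paper's route buys in exchange: the strict inequality identifies the balanced $k$-loop as the essentially unique maximizing shape (your inequality alone does not rule out other structures attaining the bound), and, more importantly, the perturbation argument is label-agnostic, so the paper can rerun it verbatim over semirings $\mathbb{Q}_{\geq 0}\subseteq R\subseteq\mathbb{R}_{\geq 0}$ to get the bonus theorem with supremum $k(N/k)^k$; to adapt your argument there you would have to replace the integer bound $p_j\le P^N_k$ on each window by its real-valued analogue $(N/k)^k$, which your own closing remarks correctly flag as the place where the integer constant $P^N_k$ is special.
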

\begin{proof}
The claim is trivial for $k=1$, so we assume that $k\geq2$. Next, we put $R=\mathbb{N}$ in Lemma~\ref{lemma1} and  
Lemma~\ref{lemma2}. 
Furthermore, without the loss of generality,
we can assume that $E$ satisfies \eqref{connect} as {red}$|E^1|\geq 1$ and disconnected vertices have no bearing on the number of paths
of length~$k$. Now we can treat $E$ as a finite $\mathbb{N}$-labelled  graph satisfying~\eqref{connect}, without loops shorter
than~$k$, and of the weight~$N$. Thus, all assumptions of both lemmas are fulfilled, and we can conclude that graphs maximizing the
number of all paths of length $k$ are of the form indicated in Lemma~\ref{lemma2}. It is clear that an open $\mathbb{N}$-labelled
path of length $k$ has fewer paths of length $k$ than  an  $\mathbb{N}$-labelled loop of length~$k$. Now we will show that shrinking
$\mathbb{N}$-labelled loops of length $m$, with $k<m<2k$, increases the number of $k$-paths. To this end, we must have $k\geq 2$.

Let $(f_1,\ldots,f_m)$ be an $\mathbb{N}$-labelled loop as in Lemma~\ref{lemma2}.
 Without the loss of generality, we can assume that  $l(f_{1})$ is the lowest label: 
$l(f_{1}):=\min\{l(f_i)\}_1^m$. Now we can shrink our  graph~$E$
\begin{center}
\begin{tikzpicture}[auto,swap]
\tikzstyle{vertex}=[circle,fill=black,minimum size=3pt,inner sep=0pt]
\tikzstyle{edge}=[draw,->]
\tikzset{every loop/.style={min distance=20mm,in=130,out=50,looseness=50}}
    \node[vertex] (1) at (-1,0) {};
   \node[vertex] (2) at (0,0) {};
    \node (3) at (0.5,0) {\ldots};
    \node[vertex] (4) at (1,0) {};
    \node[vertex] (5) at (2.5,0) {};
    \node[vertex] (6) at (4,0) {};
    \node (6) at (4.5,0) {\ldots};
    \node[vertex] (7) at (5,0) {};
    \node[vertex] (8) at (6,0) {};
   
    \path (1) edge [edge]  (2) ;
    \path (4) edge [edge] node {$l(f_{1})$} (5);
    \path (5) edge [edge] node {$l(f_{2})$} (6);
    \path (7) edge [edge]  (8);
    \path (8) edge [edge,below,bend left=45] (1);
\end{tikzpicture}
\end{center}
by eliminating the edge $f_{1}$ and adding  its label to the label of the edge $f_{2}$. Thus, we obtain a graph~$E’$
\begin{center}
\begin{tikzpicture}[auto,swap]
\tikzstyle{vertex}=[circle,fill=black,minimum size=3pt,inner sep=0pt]
\tikzstyle{edge}=[draw,->]
\tikzset{every loop/.style={min distance=20mm,in=130,out=50,looseness=50}}
    \node[vertex] (1) at (-1,0) {};
   \node[vertex] (2) at (0,0) {};
    \node (3) at (0.5,0) {\ldots};
    \node[vertex] (4) at (1,0) {};
    \node[vertex] (5) at (4,0) {};
    \node (6) at (4.5,0) {\ldots};
    \node[vertex] (7) at (5,0) {};
    \node[vertex] (8) at (6,0) {};
   
    \path (1) edge [edge]   (2) ;
    \path (4) edge [edge] node {$l(f_{1})+l(f_{2})$} (5);
    \path (7) edge [edge]   (8);
    \path (8) edge [edge,below,bend left=45]  (1);
\end{tikzpicture}
\end{center}
without loops shorter than $k$ and with the same weight (number of edges) $N$, which is an $\mathbb{N}$-labelled loop shorter by one. 
It remains to show that $|FP_k(E')|>|FP_k(E)|$.

Let $S_{1}\subseteq FP_k(E)$ be the set of all $k$-paths containing $f_{1}$ and $S_2\subseteq FP_k(E)$ and
$S'_2\subseteq FP_k(E')$ be the sets of all $k$-paths containing~$f_{2}$. To shorten notation, let us put 
$\prod_{i=x}^yl(f_i)=:\Pi_x^y$ and $\Pi_x^y:=1$ when $y<x$.
Then
\begin{align}
ct_{\mathbb{N}}^k(E)&=\sum_{j=2}^{k+1}\Pi_{m-k+j}^{m}\Pi_{1}^{j-1} + \Pi_{2}^{k+1}
+\!\!\!\!\!\!\sum_{\gamma\in FP_k(E)\setminus (S_1\cup S_2)}\!\!\!\!\!\!ct(\gamma)
\nonumber\\ &=
l(f_1)\Pi_{m-k+2}^{m} + \sum_{j=3}^{k+1}\Pi_{m-k+j}^{m}\Pi_{1}^{j-1} + \Pi_{2}^{k+1}
+\!\!\!\!\!\!\sum_{\gamma\in FP_k(E)\setminus (S_1\cup S_2)}\!\!\!\!\!\!ct(\gamma)
\nonumber\\ &=
l(f_1)\Pi_{m-k+2}^{m} + \sum_{j=2}^{k}\Pi_{m-k+j+1}^{m}\Pi_{1}^{j} + \Pi_{2}^{k+1}
+\!\!\!\!\!\!\sum_{\gamma\in FP_k(E)\setminus (S_1\cup S_2)}\!\!\!\!\!\!ct(\gamma)
\end{align}
and
\begin{align}
ct_{\mathbb{N}}^k(E')&=\big(l(f_1)+l(f_2)\big)\sum_{j=2}^{k+1}\Pi_{m-k+j}^{m}\Pi_{3}^{j}
+\!\!\!\sum_{\gamma\in FP_k(E')\setminus S'_2}\!\!\!ct(\gamma)
\nonumber\\ &=
l(f_1)\sum_{j=2}^{k+1}\Pi_{m-k+j}^{m}\Pi_{3}^{j}+\sum_{j=2}^{k+1}\Pi_{m-k+j}^{m}\Pi_{2}^{j}
+\!\!\!\sum_{\gamma\in FP_k(E')\setminus S'_2}\!\!\!ct(\gamma)
\nonumber\\ &=
l(f_1)\Pi_{m-k+2}^{m} + l(f_1)\sum_{j=3}^{k+1}\Pi_{m-k+j}^{m}\Pi_{3}^{j}+\sum_{j=2}^{k}\Pi_{m-k+j}^{m}\Pi_{2}^{j}
 + \Pi_{2}^{k+1}
+\!\!\!\sum_{\gamma\in FP_k(E')\setminus S'_2}\!\!\!ct(\gamma).
\end{align}
Hence,
\begin{align}
ct_{\mathbb{N}}^k(E')-ct_{\mathbb{N}}^k(E)
&=
 l(f_1)\sum_{j=3}^{k+1}\Pi_{m-k+j}^{m}\Pi_{3}^{j}+\sum_{j=2}^{k}\Pi_{m-k+j}^{m}\Pi_{2}^{j}
-\sum_{j=2}^{k}\Pi_{m-k+j+1}^{m}\Pi_{1}^{j}
\nonumber\\ &= 
 l(f_1)\sum_{j=3}^{k+1}\Pi_{m-k+j}^{m}\Pi_{3}^{j}+\sum_{j=2}^{k}\Pi_{m-k+j+1}^{m}\big(l(f_{m-k+j})-l(f_{1})\big)\Pi_{2}^{j}
\nonumber\\ &> 0.
\end{align}
Here the last step follows from the minimality of~$l(f_1)$ and the fact that $k\geq 2$.

Now we can conclude that the number of paths of length $k$ is maximized by $\mathbb{N}$-labelled loops of length $k$ 
with $k$ different vertices. It is clear
that the number of $k$-paths in such a loop equals $k$ times the number of $k$-paths in an $\mathbb{N}$-labelled open path 
obtained by cutting the loop 
at any vertex. As the latter number is maximized by $P^N_k$ by Theorem~\ref{oldthm}, we infer that the maximal number of $k$-paths
in our case is $kP^N_k$, and this maximum is realized by $\mathbb{N}$-labelled loops obtained by indentifying the beginning and the end of an
 $\mathbb{N}$-labelled $k$-path maximizing the number of  $k$-paths in the acyclic case.
\end{proof}

Furthermore, observe that the proof of the above theorem does not depend on labels being natural numbers. Therefore, repeating the proof
for a semiring $R$ such that $\mathbb{Q}_{\geq0}\subseteq R \subseteq\mathbb{R}_{\geq0}$ and using \cite[Theorem~6]{c-a20}, we 
arrive at the following bonus result:
\begin{theorem}
Let $R$ be a  subsemiring of $\mathbb{R}_{\geq 0}$ containing $\mathbb{Q}_{\geq0}$, $N\in R\setminus\{0\}$, $k\in\mathbb{N}\setminus\{0\}$ and 
$k$-$FG^{N}_{R}$ be
the set of all finite $R$-labelled directed graphs of weight $N$ without loops shorter than~$k$.
Then
\[ \nonumber
\sup_{\text{$E\in k$-$FG^{N}_{R}$}}\: ct^{k}_{R}(E)=k\left(\frac{N}{k}\right)^k.
\]
The supremum is achieved by the $R$-labelled directed graph given by the loop of length $k$ with $k$ vertices whose all edges 
have the same label $\frac{N}{k}$. 
\end{theorem}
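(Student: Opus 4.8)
The plan is to run the argument of Theorem~\ref{main} verbatim, but over the semiring $R$ rather than over $\mathbb{N}$, and to replace the discrete optimum $P^N_k$ of Theorem~\ref{oldthm} by its $R$-labelled continuous counterpart. First I would invoke Lemma~\ref{lemma1}, which is already stated for an arbitrary subsemiring $R$ of $\mathbb{R}_{\geq 0}$: it shows that the supremum of $ct^k_R$ over all weight-$N$ graphs coincides with the supremum over those satisfying the Chirvasitu Property \eqref{pathac}. Here one checks that the edge-collapsing operation used in that lemma stays inside $k$-$FG^N_R$, namely deleting an edge cannot create a loop, so no loop shorter than $k$ is produced, the weight is preserved, and any vertices left isolated may be discarded without affecting $ct^k_R$. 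Thus I may assume \eqref{connect} and apply Lemma~\ref{lemma2}, whose conclusion is purely structural and hence independent of $R$: the only surviving graphs are open paths $(e_1,\ldots,e_k)$ with distinct targets, or loops $(f_1,\ldots,f_m)$ with $k\le m\le 2k-1$ and distinct targets.

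Next I would collapse this finite list of shapes to a single one. For $k=1$ the statement is immediate, since $ct^1_R(E)=wt(E)=N=1\cdot(N/1)^1$. For $k\ge 2$ the two comparisons from the proof of Theorem~\ref{main} apply word for word, because they rely only on arithmetic in $\mathbb{R}_{\geq 0}$, on the minimality of a chosen label, and on $k\ge 2$: an open $k$-path carries a single $k$-path and is dominated by the length-$k$ loop, while the shrinking computation yields $ct^k_R(E')>ct^k_R(E)$ whenever $E$ is a loop of length $m$ with $k<m<2k$. Consequently the supremum over $k$-$FG^N_R$ is realized, up to the continuous optimization of the labels, by loops of length exactly $k$ with $k$ distinct vertices.

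For such a loop every cyclic shift of $(f_1,\ldots,f_k)$ is a path of length $k$ using all $k$ edges, so there are exactly $k$ of them, each of content $\prod_{i=1}^k l(f_i)$; hence $ct^k_R(E)=k\prod_{i=1}^k l(f_i)$ under the single constraint $\sum_{i=1}^k l(f_i)=N$. This is precisely $k$ times the content of an acyclic $R$-labelled path of length $k$ and weight $N$, whose supremum $(N/k)^k$ is supplied by \cite[Theorem~0.6]{c-a20} (equivalently, by the AM--GM inequality). Therefore the supremum equals $k(N/k)^k$.

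The one genuinely new point — the main obstacle — is upgrading this supremum to an attained maximum, which is exactly where the hypothesis $\mathbb{Q}_{\geq 0}\subseteq R$ enters. Since $1/k\in\mathbb{Q}_{\geq 0}\subseteq R$, $N\in R$, and $R$ is closed under multiplication, the value $N/k$ lies in $R\setminus\{0\}$ (for $N>0$; the case $N=0$ is vacuous), so the loop of length $k$ all of whose edges carry the label $N/k$ is a legitimate member of $k$-$FG^N_R$ realizing the bound. The equality case of AM--GM then singles out this uniform-label loop as the maximizer, which completes the proof.
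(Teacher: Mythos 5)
Your proposal is correct and follows essentially the same route as the paper, whose entire proof of this theorem is the remark that the argument of Theorem~\ref{main} is label-agnostic: rerun Lemmas~\ref{lemma1} and~\ref{lemma2} over $R$, reduce to length-$k$ loops with $k$ distinct vertices, and optimize the labels via \cite[Theorem~0.6]{c-a20} (equivalently AM--GM), with $\mathbb{Q}_{\geq 0}\subseteq R$ guaranteeing $N/k\in R$ so that the supremum is attained. You merely spell out details the paper leaves implicit (that the collapsing and shrinking moves stay inside $k$-$FG^{N}_{R}$, the $k=1$ case, and the attainment step), which is a faithful expansion rather than a different approach.
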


To end with, let us state the following open problem. We already know the maximal number of 
fixed-positive-length paths in acylic graphs with $N$ edges (Theorem~\ref{oldthm}). However, the 
question of what is the maximal number of all paths in a connected acyclic graph
with $N$ edges remains wide open. It is particularly interesting because it is 
equivalent to the question of what is the biggest finite dimension of a path algebra with $N$ grade-one
generators~\cite{ass06}. Alas, all we can 
prove thus far is:
\begin{proposition}
Let $E$ be an acyclic directed graph with $N\geq 1$ edges. Then the number of all positive-length
paths is not greater than $N ( \sqrt[e]{e})^{N}$.
\end{proposition}
\begin{proof}
As the arithmetic mean  is always greater than or equal to the geometric mean, first we observe that $(\frac{N}{k})^k\geq P^N_k$.
Combining now Theorem~\ref{oldthm} with elementary calculus to maximize the function
\begin{equation}
[1,N]\ni x\longmapsto x^{\frac{N}{x}}\in\mathbb{R}
\end{equation}
 yields the claim when substituting $x=\frac{N}{k}$.
\end{proof}

\section*{Acknowledgements}
\noindent This work is part of the project ``Applications of graph algebras and higher-rank graph algebras in 
noncommutative geometry'' partially supported by NCN grant UMO-2021/41/B/ST1/03387.

\bibliographystyle{acm}

\end{document}